\newdimen\AAdi%
\newbox\AAbo%
\def\AAk#1#2{\s_etbox\AAbo=\hbox{#2}\AAdi=\wd\AAbo\kern#1\AAdi{}}%
\def\AAr#1#2#3{\s_etbox\AAbo=\hbox{#2}\AAdi=\ht\AAbo\raise#1\AAdi\hbox{#3}}%
\font\tenmsb=msbm10 at 12pt
\font\sevenmsb=msbm7 at 8pt
\font\fivemsb=msbm5 at 6pt
\def\Bbb#1{{\tenmsb\fam\msbfam#1}}
\newcommand{\beq}{\begin{equation}}
\newcommand{\eeq}{\end{equation}}
\newcommand{\beqr}{\begin{eqnarray}}
\newcommand{\eeqr}{\end{eqnarray}}
\newcommand{\ba}{\begin{array}}
\newcommand{\ea}{\end{array}}
\begin{document}

\newtheorem{thm}{Theorem}
\newtheorem{lem}{Lemma}
\newtheorem{cor}{Corollary}
\newtheorem{rem}{Remark}
\newtheorem{pro}{Proposition}
\newtheorem{defi}{Definition}
\newtheorem{eg}{Example}
\newtheorem*{claim}{Claim}
\newtheorem{conj}[thm]{Conjecture}
\newcommand{\noi}{\noindent}
\newcommand{\dis}{\displaystyle}
\newcommand{\mint}{-\!\!\!\!\!\!\int}
\numberwithin{equation}{section}

\def \bx{\hspace{2.5mm}\rule{2.5mm}{2.5mm}}
\def \vs{\vspace*{0.2cm}}
\def\hs{\hspace*{0.6cm}}
\def \ds{\displaystyle}
\def \p{\partial}
\def \O{\Omega}
\def \o{\omega}
\def \b{\beta}
\def \m{\mu}
\def \l{\lambda}
\def\L{\Lambda}
\def \ul{u_\lambda}
\def \D{\Delta}
\def \d{\delta}
\def \k{\kappa}
\def \s{\sigma}
\def \e{\varepsilon}
\def \a{\alpha}
\def \tf{\tilde{f}}
\def\cqfd{%
\mbox{ }%
\nolinebreak%
\hfill%
\rule{2mm} {2mm}%
\medbreak%
\par%
}
\def \pr {\noindent {\it Proof.} }
\def \rmk {\noindent {\it Remark} }
\def \esp {\hspace{4mm}}
\def \dsp {\hspace{2mm}}
\def \ssp {\hspace{1mm}}

\def\la{\langle}\def\ra{\rangle}

\def \u{u_+^{p^*}}
\def \ui{(u_+)^{p^*+1}}
\def \ul{(u^k)_+^{p^*}}
\def \energy{\int_{\R^n}\u }
\def \sk{\s_k}
\def \mo{\mu_k}
\def\cal{\mathcal}
\def \I{{\cal I}}
\def \J{{\cal J}}
\def \K{{\cal K}}
\def \OM{\overline{M}}

\def\n{\nabla}

\def\fk{{{\cal F}}_k}
\def\M1{{{\cal M}}_1}
\def\Fk{{\cal F}_k}
\def\Fl{{\cal F}_l}
\def\FF{\cal F}
\def\Gk{{\Gamma_k^+}}
\def\n{\nabla}
\def\uuu{{\n ^2 u+du\otimes du-\frac {|\n u|^2} 2 g_0+S_{g_0}}}
\def\uuug{{\n ^2 u+du\otimes du-\frac {|\n u|^2} 2 g+S_{g}}}
\def\sku{\sk\left(\uuu\right)}
\def\qed{\cqfd}
\def\vvv{{\frac{\n ^2 v} v -\frac {|\n v|^2} {2v^2} g_0+S_{g_0}}}
\def\vvs{{\frac{\n ^2 \tilde v} {\tilde v}
 -\frac {|\n \tilde v|^2} {2\tilde v^2} g_{S^n}+S_{g_{S^n}}}}
\def\skv{\sk\left(\vvv\right)}
\def\tr{\hbox{tr}}
\def\pO{\partial \Omega}
\def\dist{\hbox{dist}}
\def\RR{\Bbb R}\def\R{\Bbb R}
\def\C{\Bbb C}
\def\B{\Bbb B}
\def\N{\Bbb N}
\def\Q{\Bbb Q}
\def\Z{\Bbb Z}
\def\PP{\Bbb P}
\def\EE{\Bbb E}
\def\F{\Bbb F}
\def\G{\Bbb G}
\def\H{\Bbb H}
\def\SS{\Bbb S}\def\S{\Bbb S}

\def\div{\hbox{div}\,}

\def\lcf{{locally conformally flat} }

\def\circledwedge{\setbox0=\hbox{$\bigcirc$}\relax \mathbin {\hbox
to0pt{\raise.5pt\hbox to\wd0{\hfil $\wedge$\hfil}\hss}\box0 }}

\def\sss{\frac{\s_2}{\s_1}}

\date{\today}
\title[ Rigidity of symplectic translating solitons ]{ Rigidity of symplectic translating solitons }

\author{}

\author[Qiu]{Hongbing Qiu}
\address{School of Mathematics and Statistics\\ Wuhan University\\Wuhan 430072,
China,
and Hubei Key Laboratory of Computational Science \\ Wuhan University\\Wuhan 430072,
China
 }
 \email{hbqiu@whu.edu.cn}
 
  \thanks{
 This work is partially supported by NSFC (No. 11771339) and Hubei Provincial Natural Science Foundation of China (No. 2021CFB400).  }

\begin{abstract}

We obtain a rigidity result of symplectic translating solitons via the complex phase map. It indicates that we can remove the bounded second fundamental form assumption for symplectic translating solitons in \cite{HanSun10}.

\vskip12pt

\noindent{\it Keywords and phrases}:  Rigidity, translating soliton, complex phase map, hyper-Lagrangian

\noindent {\it MSC 2020}:  53C24, 53E10 

\end{abstract}
\maketitle
\section{Introduction}

Let $X: M^{n} \rightarrow \mathbb{R}^{m+n}$ be an isometric immersion from an $n$-dimensional oriented Riemannian manifold $M$ to the Euclidean space $\mathbb{R}^{m+n}$.  
The mean curvature flow (MCF) in Euclidean space is a one-parameter family of immersions $X_t= X(\cdot, t): M^n \rightarrow \mathbb{R}^{m+n}$ with the corresponding image $M_t=X_t(M)$ such that
\begin{equation}\label{ODE-add}
\begin{cases}\aligned
\frac{\partial}{\partial t}X(x,t)=& H(x,t), x\in M,\\
X(x,0)=&X(x),
\endaligned
\end{cases}
\end{equation}
is satisfied, where $H(x, t)$ is the mean curvature vector of $M_t$ at $X(x, t)$ in $\mathbb{R}^{m+n}$.

$M^n$ is said to be a translating soliton in $\mathbb{R}^{m+n}$ if it satisfies
\begin{equation}\label{eqn-T111}
H=-V_{0}^N,
\end{equation}
where $V_0$ is a fixed vector in $\mathbb{R}^{m+n}$ with unit length and $V_{0}^N$ denotes the orthogonal projection of
$V_0$ onto the normal bundle of $M^n$.  
The translating soliton plays an important role in analysis of singularities in mean curvature. It is not only a special solution to the mean curvature flow, but also it is one of the most important example of Type $\rm II$ singularity (see \cite{AV95, AV97, Ham95, HuiSin99, HuiSin2-99, Whi00, Whi03}). And the geometry of the translating soliton has been paid considerable attention during the past two decades, see the references (not exhaustive): \cite{CheQiu16, CSS, Hal, Jian, Kun, MSS, Ngu09, Ngu13, NT, Wang11, Xin}, etc.

Since the translating soliton can be regarded as a generalization of minimal submanifolds, it is natural to study the Bernstein type problem of translating solitons. For the codimension one case, Bao-Shi \cite{BS} showed a translating soliton version of the Moser's theorem for minimal hypersurfaces (\cite{Mos}), namely, if the image of the translating soliton $M^n$ under the Gauss map is contained in a regular ball, then such a complete translating soliton in $\mathbb{R}^{n+1}$ has to be a hyperplane. For higher codimensions, Kunikawa \cite{Kun} generalized the result of \cite{BS} to the flat normal bundle case. 
 In general, Xin \cite{Xin} proved that if the $v$-function satisfies $v\leq v_1 < v_0:= \frac{2\cdot 3^{\frac{2}{3}}}{1+3^{\frac{2}{3}}}$, then any complete translating soliton $M^n$ in $\mathbb{R}^{m+n}_n (m\geq 2)$ must be affine linear. Recently, this result was improved by using a new test function in \cite{Qiu}.
 
 One special class of translating solitons with higher codimension is the symplectic translating solitons, which are solutions to symplectic MCFs (see \cite{CheLi01, Wan01}). By using the Omori-Yau maximum principle, Han-Li \cite{HanLi09} gave an estimate of the k\"ahler angle for symplectic standard translating solitons, where the second fundamental form $B$ satisfies $\max |B| = 1$. Afterward, Han-Sun \cite{HanSun10} showed that if the cosine of the K\"ahler angle has a positive lower bound, then any complete symplectic translating soliton with nonpositive normal curvature and bounded second fundamental form has to be an affine plane. Inspired by the work of Colding-Minicozzi \cite{ColMin12} for self-shrinkers, it is natural to ask that whether we can remove the bounded second fundamental form condition for symplectic translating solitons in the previous result of \cite{HanSun10}.
 
 Recall that Leung-Wan \cite{LeuWan07}  introduced the concept of hyper-Lagrangian manifolds which is a generalization of complex Lagrangian submanifolds: A submanifold $L^{2n}$ of a hperk\"ahler manifold $ {\widetilde M}^{4n}$ is called \emph{hyper-Lagrangian} if each tangent space $T_xL \subset T_x {\widetilde M}$ is a complex Lagrangian subspace with respect to  $\Omega_{J(x)}$ with varying $J(x) \in \mathbb{S}^2$. Such a map $x \rightarrow J(x)$ is called the \emph{complex phase map} $J: L \rightarrow \mathbb{S}^2$. We observe that any  oriented surface immersed in a hyperk\"ahler $4$-manifold is always hyper-Lagrangian, and in a hyperk\"ahler 4-manifold, a surface being symplectic is equivalent to the condition that the image under the complex phase map is contained in an open hemisphere. 
 
 In this note, firstly, by applying the fact that there is always a hyper-Lagrangian structure on a 2-dimensional translating soliton in $\mathbb{R}^4$, we show that the complex phase map of a 2-dimensional translating soliton is a generalized harmonic map, which can be regarded as a counterpart of the Ruh-Vilms type result for 2-dimensional translating solitons (Theorem \ref{thm:translator}). Secondly, by using Theorem \ref{thm:translator} and a test function in terms of the mean curvature and the complex phase map, we prove that if the image of the complex phase map is contained in a regular ball in $\mathbb{S}^2$, then any complete symplectic translating soliton with nonpositive normal curvature has to be an affine plane (Theorem \ref{thm:soliton}). Note that the image of the complex phase map being contained in a regular ball is equivalent to the condition that the cosine of the K\"ahler angle has a positive lower bound. Therefore Theorem \ref{thm:soliton} implies that the rigidity result for symplectic translating solitons in \cite{HanSun10} holds even without the bounded second fundamental form assumption.

\vskip24pt

\section{Preliminaries}

In this section, we give some notations that will be used throughout the paper and recall some relevant definitions and results.

Let ${\widetilde M}^{4n}$ be a $4n$-dimensional  hyperk\"ahler manifold, i.e., there exists two covariant constant anti-commutative almost complex structures $J_1, J_2$, i.e., $J_1, J_2$ are parallel with respect to the Levi-Civita connection and $J_1J_2=-J_2J_1$. Denote $J_3:= J_1J_2$, then the following quaternionic identities hold
\begin{equation*}
J_1^2=J_2^2=J_3^2=J_1J_2J_3=-1. 
\end{equation*}

Every $\mathrm{SO}(3)$ matrix preserves the quaternionic identities, i.e., $\{\widetilde J_{\alpha}:=\sum_{\beta=1}^3a_{\alpha\beta}J_{\beta}\}$ satisfies the quaternionic identities
\begin{equation*}
\widetilde J_1^2=\widetilde J_2^2=\tilde J_3^2=\widetilde J_1\widetilde J_2\widetilde J_3=-1.
\end{equation*}
In particular, for every unit vector $(a_1,a_2,a_3)\in\mathbb{R}^3$, we get a covariant constant almost complex structure $\sum_{\alpha=1}^3a_{\alpha}J_{\alpha}$, and this implies that $\left(M,\sum_{\alpha=1}^3a_{\alpha}J_{\alpha}\right)$ is a K\"ahler manifold.

Let $\widehat J=\sum_{\alpha=1}^3\lambda_{\alpha}J_{\alpha}$ be an almost complex structure on ${\widetilde M}$. Let $\omega_{\widehat J}$ be the K\"ahler form with respect to $\widehat J$, then the associated symplectic $2$-form $\Omega_{\widehat J}\in\Omega^{2,0}\left({\widetilde M},\widehat J\right)$ is given by
\begin{equation*}
\Omega_{\widehat J}=\omega_{K}+\sqrt{-1}\omega_{K\widehat J},
\end{equation*}
where $K=\sum_{\alpha=1}^3\mu_{\alpha}J_{\alpha}$ is an almost complex structure which is orthogonal to $\widehat J$ in the sense that $\sum_{\alpha=1}^3\lambda_{\alpha}\mu_{\alpha}=0$. If $\widehat J$ is parallel, then $\Omega_{\widehat J}$ is holomorphic with respect to the covraiant constant almost complex structure $\widehat J$.

Let $\omega_{\alpha}$ be the K\"ahler form associated with the almost complex structure $J_{\alpha}$, then $\left({\widetilde M},J_1\right)$ is a K\"ahler manifold and 
\begin{align*}
\Omega_{J_1}=\omega_2+\sqrt{-1}\omega_3\in H^{2,0}\left(M,J_1\right)
\end{align*}
is the associated holomorphic symplectic $2$-form. We say that a $2n$-dimensional submanifold $L^{2n}$ of ${\widetilde M}^{4n}$ is complex Lagrangian if for some covariant constant complex structure $\widehat J$ of ${\widetilde M}$ such that the associated holomorphic symplectic $2$-form $\Omega_{\widehat J}$ vanished everywhere on $L$. Without loss of generality, assume $\widehat J=J_1$, then $L$ is a K\"ahler submanifold of the K\"ahler manifold $({\widetilde M}, J_1)$. In particular, $L$ is a minimal submanifold of ${\widetilde M}$. Moreover, both $L\subset\left({\widetilde M}, J_2\right)$ and $L\subset\left({\widetilde M}, J_3\right)$ are  Lagrangian immersions.

We say that $L^{2n}$ is a hyper-Lagrangian submanifold of ${\widetilde M}^{4n}$ if there is an almost complex structure $\widehat J=\sum_{\alpha=1}^3\lambda_{\alpha}J_{\alpha}$ such that the associated symplectic $2$-form $\Omega_{\widehat J}$ vanished everywhere on $L$.  The map 
\begin{equation*}
J:L\rightarrow\mathbb{S}^2,\quad x\mapsto J(x):=(\lambda_1,\lambda_2,\lambda_3)
\end{equation*}
is called the complex phase map. In other words, $L$ is hyper-Lagrangian iff each $T_xL$ is a  complex Lagrangian subspace of $T_x{\widetilde M}$. Here we say that $T_xL$ is a complex Lagrangian subspace of $T_x{\widetilde M}$ if for some complex structure $\widehat J=\sum_{\alpha=1}^3\lambda_{\alpha}J_{\alpha}$ we have
\begin{equation*}
\bar g\left(K\cdot,\cdot\right)\vert_{T_xL}=0
\end{equation*}
for all almost complex structures $K=\sum_{\alpha=1}^3\mu_{\alpha}J_{\alpha}$ which are orthogonal to $\widehat  J$. Therefore, $L$ is complex Lagrangian iff $L$ is hyper-Lagrangian with the  constant complex phase map.

The complex phase map $J$ defines an almost complex structure $\widetilde J=\sum_{\alpha=1}^3\lambda_{\alpha}J_{\alpha}\vert_{TL\rightarrow TL}$ on $L$ and an almost complex structure $\widetilde J^{\bot}=\sum_{\alpha=1}^3\lambda_{\alpha}J_{\alpha}\vert_{T^{\bot}L\rightarrow T^{\bot}L}$ on $T^{\bot}L$, where $T^{\bot}L$ is the normal bundle of $L$ in ${\widetilde M}$. We denote the  Levi-Civita connection on $T{\widetilde M},TL$ by $\overline\nabla, \nabla$ respectively, if there is no confusion, we also denote the normal connection on $T^{\bot}$L by $ \nabla$. 
For $V\in \Gamma(TL)$, let $\Delta_V:= \Delta + \left< V, \nabla \cdot \right>$, where $\Delta$ is the usual Laplacian operator with respect to $\nabla$ on $L$.

\bigskip

The second fundamental form $B$ of $L^{2n}$ in ${\widetilde M}^{4n}$ is defined by
\[
B(U, W):= (\overline{\n}_U W)^N
\]
for $U, W \in \Gamma(TL)$. We use the notation $( \cdot )^T$ and $(
\cdot )^N$ for the orthogonal  projections into the tangent bundle
$TL$ and the normal bundle $T^{\bot}L$, respectively. For $\nu \in
\Gamma(T^{\bot}L)$ we define the shape operator $A^\nu: TL \rightarrow TL$
by
\[
A^\nu (U):= - (\overline{\n}_U \nu)^T
\]
Taking the trace of $B$ gives the mean curvature vector $H$ of $L$
in ${\widetilde M}^{4n}$ and
\[
H:= \hbox{trace} (B) = \sum_{i=1}^{2n} B(e_i, e_i),
\]
where $\{ e_i \}$ is a local orthonormal frame field of $L$.

The curvature tensors $R(X, Y)Z$ and $R(X, Y)\mu$ can be defined, corresponding to the connections in the tangent bundle and the normal bundle respectively, as follows:
\begin{equation*}\aligned
R(X, Y)Z =& -\n_X\n_Y Z + \n_Y\n_X Z +\n_{[X, Y]}Z, \\
R(X, Y)\mu =& -\n_X\n_Y \mu + \n_Y\n_X \mu +\n_{[X, Y]}\mu,
\endaligned
\end{equation*}
where $X, Y, Z$ are tangent vector fields, $\mu$ is a normal vector field.

\vskip24pt

\section{The complex phase map}

Firstly, we give the following characterization of the hyper-Lagrangian condition:

\begin{lem}(\cite{QS})\label{lem:hyper-Lagrangian} Let $L^{2n}$ be a submanifold of a hyperk\"ahler manifold ${\widetilde M}^{4n}$. Then $L$ is hyper-Lagrangian iff
\begin{equation*}
J_{\alpha}\vert_{TL\rightarrow TL}=\lambda_{\alpha}\widetilde J,\quad\alpha=1,2,3,
\end{equation*}
iff
\begin{equation*}
J_{\alpha}\vert_{T^{\bot}L\rightarrow T^{\bot}L}=\lambda_{\alpha}\widetilde J^{\bot},\quad\alpha=1,2,3.
\end{equation*}
\end{lem}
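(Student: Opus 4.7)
The plan is to interpret the hyper-Lagrangian condition as a statement about how each $J_\alpha$ decomposes with respect to the orthogonal splitting $T\widetilde{M} = TL \oplus T^{\bot}L$, and then read off both the tangential and normal characterizations from that decomposition. At a point $x$ with complex phase $\vec{\lambda} = (\lambda_1, \lambda_2, \lambda_3)$, the defining condition $\Omega_{\widehat{J}}|_{T_xL} = 0$ for $\widehat{J} = \sum_\alpha \lambda_\alpha J_\alpha$ unpacks, via $\Omega_{\widehat{J}} = \omega_K + \sqrt{-1}\,\omega_{K\widehat{J}}$, into two geometric statements: $\widehat{J}$ preserves $T_xL$ (the $\widehat{J}$-invariance inherent in being complex Lagrangian), and every $K = \sum_\alpha \mu_\alpha J_\alpha$ with $\vec{\mu} \perp \vec{\lambda}$ sends $T_xL$ into $T_xL^{\bot}$ (equivalent to $\omega_K|_{T_xL} = 0$ by skew-symmetry of $\omega_K$).

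For the direction $(1) \Rightarrow (2)$, I would write
\begin{equation*}
J_\alpha = \lambda_\alpha \widehat{J} + \bigl(J_\alpha - \lambda_\alpha \widehat{J}\bigr),
\end{equation*}
observing that the second summand corresponds to the coefficient vector $\vec{e}_\alpha - \lambda_\alpha \vec{\lambda}$, which is orthogonal to $\vec{\lambda}$ and hence lies in the plane of complex structures orthogonal to $\widehat{J}$. Applied to $u \in T_xL$, the first term stays in $T_xL$ while the second is thrown into $T_xL^{\bot}$, so the tangential projection gives $(J_\alpha u)^T = \lambda_\alpha \widehat{J} u = \lambda_\alpha \widetilde{J} u$. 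For the converse $(2) \Rightarrow (1)$, contracting with $\lambda_\alpha$ yields $(\widehat{J}u)^T = \sum_\alpha \lambda_\alpha^2 \widetilde{J} u = \widetilde{J} u$; since $\widehat{J}$ is an isometry and $|\widetilde{J} u| = |u|$, a Pythagoras argument forces $(\widehat{J}u)^N = 0$, so $\widehat{J}(T_xL) \subset T_xL$. For any $\vec{\mu} \perp \vec{\lambda}$, $(Ku)^T = \sum_\alpha \mu_\alpha \lambda_\alpha \widetilde{J} u = 0$, so $K(T_xL) \subset T_xL^{\bot}$, recovering the hyper-Lagrangian condition.

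For the equivalence with the normal-bundle characterization, the extra ingredient I would need is that each unit complex structure $K$ orthogonal to $\widehat{J}$ actually swaps $T_xL$ and $T_xL^{\bot}$: once $K(T_xL) \subset T_xL^{\bot}$, the identity $K^2 = -\mathrm{id}$ together with the isometry property forces $K|_{T_xL} : T_xL \rightarrow T_xL^{\bot}$ to be an isomorphism, and hence $K(T_xL^{\bot}) = T_xL$ by dimension count. Feeding the same decomposition into $\nu \in T_xL^{\bot}$ then places $\widehat{J}\nu$ in $T_xL^{\bot}$ and $(J_\alpha - \lambda_\alpha \widehat{J})\nu$ in $T_xL$, so $(J_\alpha \nu)^N = \lambda_\alpha \widetilde{J}^{\bot} \nu$; the converse is symmetric. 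I expect the main delicate point to be bookkeeping the orthogonality in the three-dimensional space of complex structures and translating it into the tangent/normal decomposition on $L$, with the $K^2 = -\mathrm{id}$ swap argument serving as the essential bridge to the normal-bundle characterization.
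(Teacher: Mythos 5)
Your proposal is correct and is essentially the paper's own argument in coordinate-free dress: your decomposition $J_\alpha=\lambda_\alpha\widehat J+(J_\alpha-\lambda_\alpha\widehat J)$ is exactly the paper's expansion of $J_\alpha$ in the rotated frame $\widetilde J_\beta=\sum_\gamma a_{\beta\gamma}J_\gamma$ with $a_{1\gamma}=\lambda_\gamma$ (so the remainder is a combination of $\widetilde J_2,\widetilde J_3$), and your swap argument from $K^2=-\mathrm{id}$ is the invariant form of the paper's block-matrix step where $\widetilde J_\beta^2=-1$ forces the remaining diagonal block to vanish. One hairline remark: with the paper's definition of complex Lagrangian, the $\widehat J$-invariance of $T_xL$ that you assert at the outset is itself a consequence (e.g. via $\widehat J=K_2K_3$ and two applications of the swap), not part of the definition, but your own swap lemma supplies it in one line.
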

\begin{proof}
For the completeness, we write the details of the proof as follows:

Under the orthogonal decomposition $T_x{\widetilde M}=T_xL\oplus T_x^{\bot}L$, we write
\begin{equation*}
J_{\alpha}=\begin{pmatrix}A_{\alpha}&B_{\alpha}\\
-B_{\alpha}^T&C_{\alpha}
\end{pmatrix},\quad 
\alpha=1,2,3.
\end{equation*}
Let $A=(a_{\alpha\beta})_{1\leq \alpha,\beta\leq 3}\in\mathrm{SO}(3)$ where $\lambda_{\alpha}=a_{1\alpha}$. Set $\widetilde J_{\beta}=\sum_{\alpha=1}^3a_{\beta\alpha}J_{\alpha}$. Since $L$ is hyper-Lagrangian, we get
\[
\sum_{\gamma=1}^3 a_{\b \gamma}A_\gamma =0, \quad \b=2, 3.
\]
Namely
\begin{equation*}
\widetilde J_{\b}=\begin{pmatrix}0&\sum_{\alpha=1}^3a_{\b\alpha}B_{\alpha}\\
-\sum_{\alpha=1}^3a_{\b\alpha}B_{\alpha}^{T}&\sum_{\alpha=1}^3a_{\b\alpha}C_{\alpha}
\end{pmatrix}, \quad \b=2, 3.
\end{equation*}
Since $\widetilde{J}_{\b}^2 = -1$, we can conclude that 
\[
 \sum_{\gamma=1}^3 a_{\b \gamma}C_\gamma =0, \quad \b=2, 3.
\]
Hence we have
\begin{equation*}
\widetilde J_{2}=\begin{pmatrix}0&\sum_{\alpha=1}^3a_{2\alpha}B_{\alpha}\\
-\sum_{\alpha=1}^3a_{2\alpha}B_{\alpha}^{T}&0
\end{pmatrix},\quad \widetilde J_{3}=\begin{pmatrix}0&\sum_{\alpha=1}^3a_{3\alpha}B_{\alpha}\\
-\sum_{\alpha=1}^3a_{3\alpha}B_{\alpha}^{T}&0
\end{pmatrix},
\end{equation*}
and 
\begin{equation*}
A_{\alpha}=\sum_{\b, \gamma=1}^3 a_{\a\b}a_{\b\gamma}A_{\gamma} = \sum_{\gamma=1}^3 a_{1\a}a_{1\gamma}A_\gamma = a_{1\a} \widetilde{J} = \lambda_{\alpha}\widetilde J,\quad\alpha=1,2,3,
\end{equation*}
\begin{equation*}
C_{\alpha}=\sum_{\b, \gamma=1}^3 a_{\a\b}a_{\b\gamma}C_{\gamma} = \sum_{\gamma=1}^3 a_{1\a}a_{1\gamma}C_\gamma = a_{1\a} \widetilde{J}^{\bot} =  \lambda_{\alpha}\widetilde J^{\bot},\quad\alpha=1,2,3.
\end{equation*}
Conversely, if $A_{\alpha}= \lambda_{\alpha}\widetilde J (\a = 1, 2, 3),$ it follows that 
\[
\sum_{\gamma=1}^3 a_{\b \gamma}A_\gamma = \sum_{\gamma =1}^3 a_{\b\gamma} \l_{\gamma} \widetilde{J} = \sum_{\gamma =1}^3 a_{\b\gamma} a_{1\gamma} \widetilde{J} =\d_{\b 1} \widetilde{J}= 0, \quad \b=2, 3. 
\]
Similarly,  if $C_{\alpha}=   \lambda_{\alpha}\widetilde J^{\bot} (\a= 1, 2, 3)$, 
\[
\sum_{\gamma=1}^3 a_{\b \gamma}C_\gamma = \sum_{\gamma =1}^3 a_{\b\gamma} \l_\gamma \widetilde{J}^{\bot} = \sum_{\gamma =1}^3 a_{\b\gamma} a_{1\gamma} \widetilde{J}^{\bot} =\d_{\b 1} \widetilde{J}^{\bot}= 0, \quad \b=2, 3. 
\]
Then we get
\begin{equation*}
\widetilde J_{\b}=\begin{pmatrix}\sum_{\alpha=1}^3a_{\b\alpha}A_{\alpha}&\sum_{\alpha=1}^3a_{\b\alpha}B_{\alpha}\\
-\sum_{\alpha=1}^3a_{\b\alpha}B_{\alpha}^{T}&0
\end{pmatrix}, \quad \b=2, 3.
\end{equation*}
Since $\widetilde{J}_{\b}^2 = -1$, we can conclude that 
\[
 \sum_{\gamma=1}^3 a_{\b \gamma}A_\gamma =0, \quad \b=2, 3.
\]
 Hence we complete the proof.
\end{proof}
\begin{rem}\label{rem1}
This Lemma claims that any oriented surface immersed in a hyperk\"ahler $4$-manifold is always hyper-Lagrangian.
\end{rem}

 Recall that a map $u$ from a Riemannian manifold $(M,g)$ to another Riemannian manifold $(N,h)$
is called a $V$-harmonic map if it solves
\[\tau (u)+du(V)=0,\]
where $\tau(u)$ is the tension field of the map $u$, and $V$ is a vector field on $M$ (cf.\cite{CheJosWan15, CheJosQiu12}). Clearly, it is a generalization of the usual harmonic map.

According to Remark \ref{rem1}, a 2-dimensional translating soliton $\Sigma$ in $\mathbb{R}^4$ is hyper-Lagrangian. By applying this new structure on it, we show that the complex phase map of $\Sigma$ is a $-V_0^{T}$-harmonic map as follows:.

\begin{thm}\label{thm:translator}Let $X:\Sigma^2\rightarrow\mathbb{R}^4$ be a 2-dimensional translating soliton. Then the complex phase map $J:\Sigma\rightarrow\mathbb{S}^2$ satisfies
\begin{equation*}
\tau\left(J\right)= dJ\left(V_0^{T}\right).
\end{equation*}
\end{thm}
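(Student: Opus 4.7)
The plan is to compute $\tau(J)$ in a local frame on $\Sigma$, using the parallelism of the complex structures in $\R^4$, the Codazzi equation, and the soliton identity $H=-V_0^N$, and then match the result to $dJ(V_0^T)$.

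First, I would show that for any oriented orthonormal tangent frame $\{e_1,e_2\}$ on $\Sigma$ the components of the complex phase map are given by $\lambda_\alpha = \omega_\alpha(e_1,e_2) = \langle J_\alpha e_1, e_2\rangle$; this follows from Lemma~\ref{lem:hyper-Lagrangian} applied with $\widetilde J e_1 = e_2$. Differentiating along a tangent vector $Y$ and using $\overline\nabla_Y(J_\alpha e_k) = J_\alpha\,\overline\nabla_Y e_k$ together with the Gauss formula, one obtains the tensorial identity
\begin{equation*}
Y\lambda_\alpha = \langle J_\alpha B(Y,e_1), e_2\rangle + \langle J_\alpha e_1, B(Y,e_2)\rangle,
\end{equation*}
since the terms involving $\n_Y e_k$ vanish by skew-symmetry of $J_\alpha$ ($\n_Y e_1$ is a multiple of $e_2$, and $\n_Y e_2$ is a multiple of $e_1$). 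Setting $Y = V_0^T$ gives the $\alpha$-component of $dJ(V_0^T)$.

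Next, fix $x_0\in\Sigma$ and extend $\{e_1,e_2\}$ so that $\n e_j|_{x_0}=0$. Differentiating the formula above once more at $x_0$ and splitting $\overline\nabla_{e_i} B(e_i,e_k)$ via Weingarten into its tangential part $-A^{B(e_i,e_k)}e_i$ and its normal part, the Codazzi equation in $\R^4$ yields $\sum_i (\n^{\bot}_{e_i} B)(e_i,e_k) = \n^{\bot}_{e_k} H$. Parallelism of the constant vector $V_0$ in $\R^4$ combined with $H=-V_0^N$ then gives $\n^{\bot}_{e_k} H = B(e_k, V_0^T)$, so (applying the tensorial identity with $Y=V_0^T$ in reverse) the $H$-contribution to $\Delta\lambda_\alpha$ reassembles into exactly $dJ(V_0^T)_\alpha$. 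One is left with
\begin{equation*}
\Delta\lambda_\alpha = dJ(V_0^T)_\alpha + E_\alpha,
\end{equation*}
where $E_\alpha$ collects the shape-operator pieces $-\langle J_\alpha \sum_i A^{B(e_i,e_1)}e_i, e_2\rangle - \langle J_\alpha e_1, \sum_i A^{B(e_i,e_2)}e_i\rangle$ together with the quadratic cross terms $2\sum_i\langle J_\alpha B(e_i,e_1), B(e_i,e_2)\rangle$. (The $\n_{e_i}\n_{e_i} e_k$ terms that arise also drop out by the same skew-symmetry of $J_\alpha$.)

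The remaining step is the purely algebraic identity $E_\alpha = -|dJ|^2\lambda_\alpha$. For this I would choose a normal frame $\{e_3,e_4\}$ adapted to $\widetilde J^{\bot}$ and use Lemma~\ref{lem:hyper-Lagrangian} to block-decompose each $J_\alpha$; the quaternionic identities $J_1 J_2 = J_3$, etc., force the triples $(a_\alpha,b_\alpha,\lambda_\alpha)$ formed from the resulting entries to constitute an orthonormal basis of $\R^3$. In this adapted frame both $E_\alpha$ and $|dJ|^2$ reduce to explicit polynomials in the second-fundamental-form components $h_{ij}^k$, and a direct bookkeeping check (using $\sum_\alpha a_\alpha^2 = \sum_\alpha b_\alpha^2 = 1$ and $\sum_\alpha a_\alpha b_\alpha = 0$) matches them. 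Inserting into the tension-field formula $\tau(J) = \Delta J + |dJ|^2 J$ for maps into $\mathbb{S}^2\subset\R^3$ completes the proof. The hardest part is this final algebraic matching: it is where the hyper-K\"ahler/hyper-Lagrangian geometry truly enters, and it is logically independent of the translating-soliton condition (it encodes a Ruh--Vilms-type harmonicity of the complex phase map for any oriented surface in a hyperk\"ahler $4$-manifold).
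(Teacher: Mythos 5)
Your proposal is correct and follows essentially the same route as the paper's proof: both differentiate the phase-map components (your $\lambda_\alpha=\langle J_\alpha e_1,e_2\rangle$ is the paper's Lemma~\ref{lem:hyper-Lagrangian} identity $(J_\alpha Z)^T=\lambda_\alpha\widetilde{J}Z$ specialized to a frame with $e_2=\widetilde{J}e_1$), reduce $\Delta\lambda_\alpha$ via Gauss--Weingarten--Codazzi to a $\nabla H$ term plus terms quadratic in $B$, and convert the $\nabla H$ term through $H=-V_0^N$ into exactly $dJ(V_0^T)_\alpha$. The only divergence is your crux identity $E_\alpha=-|dJ|^2\lambda_\alpha$, which you leave as an adapted-frame bookkeeping check, whereas the paper derives it structurally by first proving $\nabla\widetilde{J}=0$ and the formula $B(e_j,\widetilde{J}e_i)-\widetilde{J}^{\bot}B(e_j,e_i)=\sum_{\beta}e_j(\lambda_\beta)J_\beta e_i$, so the quadratic terms collapse to $-\lambda_\alpha\sum_\beta|\nabla\lambda_\beta|^2$ with no component computation; your observation that this step is independent of the soliton equation is likewise borne out by the paper's intermediate formula $(\tau(J))^\alpha=\sum_i\langle\nabla_{\widetilde{J}e_i}H,J_\alpha e_i\rangle$, valid for any surface in a hyperk\"ahler $4$-manifold.
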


\begin{proof}

Since  $\Sigma$ is hyper-Lagrangian, by  Lemma \ref{lem:hyper-Lagrangian}, for any $Z\in \Gamma(T\Sigma)$, we get 
\begin{equation}\label{eqn-J666}
\l_\a\widetilde{J}Z =(J_\a Z)^T.
\end{equation}
Then we take the covariant differential on both sides of the above equation with any $Y\in \Gamma(T\Sigma)$,
\begin{equation}\label{eqn-cova}
\n_Y (\l_\a\widetilde{J}Z) = \n_Y (J_\a Z)^T.
\end{equation}
Let $\{e_1, e_2\}$ be a local orthonormal frame field of $T\Sigma$, and $\n e_i = 0$ at the considered point. The Gauss formula and (\ref{eqn-J666}) imply that 
\begin{equation}\label{eqn-RHS}\aligned
&\n_Y (J_\a Z)^T = \overline{\n}_Y(J_\a Z)^T - B(Y, (J_\a Z)^T) \\
=& \overline{\n}_Y \left(\sum_{j=1}^2 \la J_\a Z, e_j \ra e_j \right) - B(Y, \l_\a \widetilde{J}Z) \\
=&  \sum_{j=1}^2 \left[ \left( Y\la J_\a Z, e_j \ra \right) e_j + \la J_\a Z, e_j \ra \overline{\n}_Y e_j \right] - B(Y, \l_\a \widetilde{J}Z) \\
=& (\overline{\n}_Y(J_\a Z))^T + \sum_{j=1}^2 \la J_\a Z, B(Y, e_j) \ra e_j + \sum_{j=1}^2 \la J_\a Z, e_j \ra B(Y, e_j) - \l_\a B(Y,  \widetilde{J}Z) \\
\endaligned
\end{equation}
Since
\begin{equation}\label{eqn-J1}\aligned
(\overline{\n}_Y(J_\a Z))^T = & ((\overline{\n}_Y J_\a) Z + J_\a \overline{\n}_Y Z)^T \\
=& (J_\a \overline{\n}_Y Z)^T = (J_\a \n_Y Z + J_\a B(Y, Z))^T,
\endaligned
\end{equation}
and 
\begin{equation}\label{eqn-JB1}\aligned
\sum_{j=1}^2 \la J_\a Z, e_j \ra B(Y, e_j) = & \sum_{j=1}^2 \la (J_\a Z)^T, e_j \ra B(Y, e_j) \\
=& \sum_{j=1}^2 \la \l_\a \widetilde{J}Z, e_j \ra B(Y, e_j) = \l_\a B(Y, \widetilde{J}Z).
\endaligned
\end{equation}
Substituting (\ref{eqn-J1}) and (\ref{eqn-JB1}) into (\ref{eqn-RHS}), we obtain
\begin{equation}\label{eqn-RHS2}\aligned
\n_Y (J_\a Z)^T = &  (J_\a \n_Y Z + J_\a B(Y, Z))^T +  \sum_{j=1}^2 \la (J_\a Z)^N, B(Y, e_j) \ra e_j \\
=&  (J_\a \n_Y Z )^T+ (J_\a B(Y, Z))^T +  \sum_{j=1}^2 \la A^{(J_\a Z)^N} (Y), e_j \ra e_j \\
=& \l_\a \widetilde{J} \n_Y Z + (J_\a B(Y, Z))^T +  A^{(J_\a Z)^N} (Y)
\endaligned
\end{equation}
By (\ref{eqn-cova}) and (\ref{eqn-RHS2}), we have 
\begin{equation*}\label{eqn-RHS3}\aligned
Y(\l_\a)\widetilde{J}Z + \l_\a (\n_Y \widetilde{J})Z +\l_\a \widetilde{J}\n_Y Z = & \n_Y (\l_\a \widetilde{J}Z) \\
= & \n_Y (J_\a Z)^T \\
= &  \l_\a \widetilde{J} \n_Y Z + (J_\a B(Y, Z))^T +  A^{(J_\a Z)^N} (Y).
\endaligned
\end{equation*}
Namely
\begin{equation}\label{eqn-LRH}\aligned
Y(\l_\a)\widetilde{J}Z + \l_\a (\n_Y \widetilde{J})Z  =  (J_\a B(Y, Z))^T +  A^{(J_\a Z)^N} (Y).
\endaligned
\end{equation}
Multiplying $\l_\a$ on both sides of the above equality and take summation from $\a=1$ to $3$, we get 
\begin{equation*}\label{eqn-RHS3}\aligned
Y(\sum_{\a=1}^3 \l_\a^{2})\widetilde{J}Z + \sum_{\a=1}^3\l_\a^{2} (\n_Y \widetilde{J})Z  =  (\sum_{\a=1}^3\l_\a J_\a B(Y, Z))^T +  A^{(\sum_{\a=1}^3\l_\a J_\a Z)^N} (Y).
\endaligned
\end{equation*}
Note that $\sum_{\a=1}^3\l_\a^{2}=1$ and $\sum_{\a=1}^3\l_\a J_\a = \widetilde{J}_1$, where $\widetilde{J}_1$ is the same as the one in the proof of Lemma \ref{lem:hyper-Lagrangian}, we then derive
\begin{equation*}\label{eqn-RHS3}\aligned
(\n_Y \widetilde{J})Z = (\widetilde{J}_1 B(Y, Z))^T + A^{(\widetilde{J}_1Z)^N} (Y).
\endaligned
\end{equation*}
Since $\widetilde{J}_1 = J = \widetilde{J} \oplus \widetilde{J}^{\bot}$, we have
\[
(\widetilde{J}_1Z)^N = 0  \quad  (\widetilde{J}_1 B(Y, Z))^T=0.
\]
Therefore 
\begin{equation}\label{eqn-J}
\n \widetilde{J}=0.
\end{equation}
Combining (\ref{eqn-LRH}) with (\ref{eqn-J}), it follows that
\begin{equation}\label{eqn-RHS4}\aligned
Y(\l_\a)\widetilde{J}Z  =  (J_\a B(Y, Z))^T +  A^{(J_\a Z)^N} (Y).
\endaligned
\end{equation}
Let $Z =e_j$ in (\ref{eqn-RHS4}), direct computation gives us
\begin{equation}\label{eqn-RHS5}\aligned
2Y(\l_\a) = &\sum_{j=1}^2 Y(\l_\a)\la \widetilde{J}e_j, \widetilde{J}e_j \ra = \sum_{j=1}^2 \la J_\a B(Y, e_j), \widetilde{J}e_j \ra + \sum_{j=1}^2 \la A^{(J_\a e_j)^N}(Y), \widetilde{J}e_j \ra \\
= & - \sum_{j=1}^2 \la  B(Y, e_j), J_\a\widetilde{J}e_j \ra +\sum_{j=1}^2 \la A^{(J_\a e_j)^N}(Y), \widetilde{J}e_j \ra
\endaligned
\end{equation}
Since $ \sum_{j=1}^2 \la  B(Y, e_j), J_\a\widetilde{J}e_j \ra$ is independent of the choice of local orthonormal frame fields of $\Sigma$ and $\la \widetilde{J}e_i, \widetilde{J}e_j \ra = \d_{ij}$, we obtain
\begin{equation}\label{eqn-RHS6}\aligned
 - \sum_{j=1}^2 \la  B(Y, e_j), J_\a\widetilde{J}e_j \ra = & \sum_{j=1}^2 \la B(Y, \widetilde{J}e_j), J_\a e_j \ra \\
 =&  \sum_{j=1}^2 \la B(Y, \widetilde{J}e_j), (J_\a e_j)^N \ra = \sum_{j=1}^2 \la A^{(J_\a e_j)^N}(Y), \widetilde{J}e_j \ra.
\endaligned
\end{equation}
From (\ref{eqn-RHS5}) and (\ref{eqn-RHS6}), we get
\[
Y(\l_\a) = \sum_{j=1}^2 \la A^{(J_\a e_j)^N}(Y), \widetilde{J}e_j \ra = \sum_{j=1}^2 \la A^{(J_\a e_j)^N}(\widetilde{J}e_j), Y\ra.
\]
It follows that
\begin{equation}\label{eqn-RHS7}\aligned
 \n \l_\a = \sum_{j=1}^2 e_j(\l_\a)e_j = \sum_{i=1}^2 A^{(J_\a e_i)^N}(\widetilde{J}e_i)
\endaligned
\end{equation}
and 
\begin{equation}\label{eqn-RHS8}\aligned
 \D \l_\a = &  \sum_{j=1}^2 e_j \left\la \sum_{i=1}^2 A^{(J_\a e_i)^N}(\widetilde{J}e_i), e_j \right\ra \\
 =&  \sum_{i, j=1}^2 e_j \left\la B(\widetilde{J}e_i, e_j), (J_\a e_i)^N \right\ra =  \sum_{i, j=1}^2 e_j \left\la B(\widetilde{J}e_i, e_j), J_\a e_i \right\ra \\
 =&  \sum_{i, j=1}^2 \left\la \overline{\n}_{e_j} B(\widetilde{J}e_i, e_j), J_\a e_i  \right\ra +  \sum_{i, j=1}^2 \left\la B(\widetilde{J}e_i, e_j), \overline{\n}_{e_j}(J_\a e_i)  \right\ra.
\endaligned 
\end{equation}
Using Weingarten formula in (\ref{eqn-RHS8}),
\begin{equation}\label{eqn-RHS9}\aligned
 \D \l_\a = &  \sum_{i, j=1}^2 \left\la -A^{B(\widetilde{J}e_i, e_j)}(e_j) + \n_{e_j}B(\widetilde{J}e_i, e_j), J_\a e_i   \right\ra +  \sum_{i, j=1}^2 \left\la B(\widetilde{J}e_i, e_j), J_\a \overline{\n}_{e_j} e_i  \right\ra \\
 =& - \sum_{i, j=1}^2\left\la B(e_j, (J_\a e_i)^T), B(\widetilde{J}e_i, e_j)  \right\ra + \sum_{i, j=1}^2 \left\la \n_{e_j}B(\widetilde{J}e_i, e_j), J_\a e_i  \right\ra \\
 &+ \sum_{i, j=1}^2 \left\la B(\widetilde{J}e_i, e_j), J_\a B(e_i, e_j)  \right\ra.
 \endaligned 
\end{equation}
Since 
\[
(J_\a e_i)^T = \l_\a \widetilde{J}e_i,
\]
we get
\begin{equation}\label{eqn-RHS10}\aligned
 B(e_j, (J_\a e_i)^T) = \l_\a B(e_j, \widetilde{J}e_i).
 \endaligned 
\end{equation}
By  Lemma \ref{lem:hyper-Lagrangian}, for any $\xi \in \Gamma(T^\bot \Sigma)$,
\[
( J_\a \xi )^N = \l_\a \widetilde{J}^{\bot} \xi.
\]
Thus
\begin{equation}\label{eqn-B}
(J_\a B(e_i, e_j))^N = \l_\a \widetilde{J}^{\bot} B(e_i, e_j).
\end{equation}
Then we have
\begin{equation}\label{eqn-RHS11}\aligned
\left\la B(\widetilde{J}e_i, e_j), J_\a B(e_i, e_j)  \right\ra = \left\la B(\widetilde{J}e_i, e_j), (J_\a B(e_i, e_j) )^N \right\ra = \l_\a \left\la B(\widetilde{J}e_i, e_j),  \widetilde{J}^{\bot} B(e_i, e_j)\right\ra.
 \endaligned 
\end{equation}
Substituting (\ref{eqn-RHS10}) and (\ref{eqn-RHS11}) into (\ref{eqn-RHS9}), we obtain
\begin{equation}\label{eqn-RHS12}\aligned
 \D \l_\a = &  - \l_\a \sum_{i, j=1}^2 \left\la B(\widetilde{J}e_i, e_j),  B(\widetilde{J}e_i, e_j) -  \widetilde{J}^{\bot} B(e_i, e_j) \right\ra + \sum_{i, j=1}^2 \left\la \n_{e_j}B(\widetilde{J}e_i, e_j), J_\a e_i  \right\ra.
 \endaligned 
\end{equation}
By using the Gauss formula again, 
\begin{equation}\label{eqn-RHS13}\aligned
 B(e_j, \widetilde{J}e_i) =& (\overline{\n}_{e_j}(\widetilde{J}e_i))^N = \left( \sum_{\b=1}^3 \overline{\n}_{e_j}(\l_\b J_\b e_i) \right)^N \\
 =& \left( \sum_{\b=1}^3 e_j(\l_\b)J_\b e_i + \sum_{\b=1}^3 \l_\b J_\b B(e_j, e_i) \right)^N \\
 =& \sum_{\b=1}^3 e_j(\l_\b) (J_\b e_i)^N + \sum_{\b=1}^3 \l_\b (J_\b B(e_j, e_i))^N.
 \endaligned 
\end{equation}
Since
\[
(J_\b e_i)^N = J_\b e_i - (J_\b e_i)^T = J_\b e_i - \l_\b \widetilde{J}e_i
\]
Substituting (\ref{eqn-B}) and the above equality into (\ref{eqn-RHS13}), we get
\begin{equation}\label{eqn-RHS14}\aligned
 B(e_j, \widetilde{J}e_i) =& \sum_{\b=1}^3e_j(\l_\b) J_\b e_i - \frac{1}{2}  e_j\left(\sum_{\b=1}^3 \l_\b^{2}\right)\widetilde{J}e_i + \sum_{\b=1}^3 \l_\b^{2}\widetilde{J}^\bot B(e_j, e_i) \\
 =&  \sum_{\b=1}^3e_j(\l_\b) J_\b e_i + \widetilde{J}^\bot B(e_j, e_i).
 \endaligned 
\end{equation}
By (\ref{eqn-J}), we derive
\begin{equation*}\label{eqn-RHS141}\aligned
\n_{e_j}B(\widetilde{J}e_i, e_j) = & (\n_{e_j}B)(\widetilde{J}e_i, e_j) + B\left( \n_{e_j}(\widetilde{J}e_i), e_j \right) + B\left( \widetilde{J}e_i, \n_{e_j}e_j  \right)   \\
=&   (\n_{e_j}B)(\widetilde{J}e_i, e_j)+  B\left( (\n_{e_j}\widetilde{J})e_i + \widetilde{J}\n_{e_j}e_i, e_j \right) + B\left( \widetilde{J}e_i, \n_{e_j}e_j  \right)  \\
=&  (\n_{e_j}B)(\widetilde{J}e_i, e_j). 
 \endaligned 
\end{equation*}
The Codazzi equation then implies
\begin{equation}\label{eqn-RHS15}\aligned
\sum_{j=1}^2\n_{e_j}B(\widetilde{J}e_i, e_j) =& \sum_{j=1}^2 (\n_{e_j}B)(\widetilde{J}e_i, e_j) =\sum_{j=1}^2 (\n_{\widetilde{J}e_i} B) (e_j, e_j) \\
=& \sum_{j=1}^2\n_{\widetilde{J}e_i} B (e_j, e_j) = \n_{\widetilde{J}e_i} H.
 \endaligned 
\end{equation}
 Substututing (\ref{eqn-RHS14}) and (\ref{eqn-RHS15}) into (\ref{eqn-RHS12}), we obtain
\begin{equation}\label{eqn-RHS16}\aligned
 \D \l_\a = &  - \l_\a \sum_{i, j=1}^2 \left\la B(\widetilde{J}e_i, e_j),   \sum_{\b=1}^3 e_j(\l_\b) J_\b e_i \right\ra + \sum_{i=1}^2 \left\la  \n_{\widetilde{J}e_i} H, J_\a e_i  \right\ra \\
 =& - \l_\a \sum_{i=1}^2 \sum_{\b=1}^3\left\la B(\widetilde{J}e_i, \n \l_\b),   J_\b e_i \right\ra + \sum_{i=1}^2 \left\la  \n_{\widetilde{J}e_i} H, J_\a e_i  \right\ra
 \endaligned 
\end{equation}
From (\ref{eqn-RHS7}) and (\ref{eqn-RHS16}), we have
\begin{equation*}\label{eqn-RHS166}\aligned
 \D \l_\a =& - \l_\a \sum_{i=1}^2 \sum_{\b=1}^3\left\la A^{(J_\b e_i)^N}(\widetilde{J}e_i), \n \l_\b \right\ra + \sum_{i=1}^2 \left\la  \n_{\widetilde{J}e_i} H, J_\a e_i  \right\ra \\
 =&  - \l_\a \sum_{\b=1}^3 |\n \l_\b|^2 + \sum_{i=1}^2 \left\la  \n_{\widetilde{J}e_i} H, J_\a e_i  \right\ra. 
 \endaligned 
\end{equation*}
Namely,
\begin{equation}\label{eqn-RHS17}
(\tau(J))^\a =  \sum_{i=1}^2 \left\la  \n_{\widetilde{J}e_i} H, J_\a e_i  \right\ra. 
\end{equation}
By (\ref{eqn-RHS17}), the translator equation (\ref{eqn-T111}) and the Weingarten formula, we get
\begin{equation}\label{eqn-T1}\aligned
(\tau(J))^\a = &\sum_{j=1}^2\left\la\nabla_{\widetilde{J}e_j}H, J_\alpha e_j \right\ra = - \sum_{j=1}^2\left\la \nabla_{\widetilde{J}e_j} V_{0}^{N}, J_\alpha e_j \right\ra \\
=  & -\sum_{j=1}^2\left\la \overline{\nabla}_{\widetilde{J}e_j} V_0^{N}, J_\alpha e_j \right\ra-  \sum_{j=1}^2\left\la A^{V_0^{N}}(\widetilde{J}e_j), J_\alpha e_j \right\ra \\
=& - \sum_{j=1}^2\left\la \overline{\nabla}_{\widetilde{J}e_j} V_0, J_\alpha e_j \right\ra + \sum_{j=1}^2 \left\la \overline{\nabla}_{\widetilde{J}e_j} V_0^{T}, J_\alpha e_j \right\ra \\
&-  \sum_{j=1}^2 \left\la B(\widetilde{J}e_j, (J_\a e_j)^T), V_0^{N} \right\ra\\
=&  \sum_{j=1}^2 \left\la \overline{\nabla}_{\widetilde{J}e_j} V_0^{T}, J_\alpha e_j \right\ra -  \sum_{j=1}^2 \left\la B(\widetilde{J}e_j, (J_\a e_j)^T), V_0^{N} \right\ra.
\endaligned
\end{equation}
Since the translating soliton surface $\Sigma$ is hyper-Lagrangian, by Lemma \ref{lem:hyper-Lagrangian} ,we have
\begin{equation}\label{eqn-J12}
(J_\a e_j)^T = \l_\a \widetilde{J}e_j.
\end{equation}
Thus we obtain
\begin{equation}\label{eqn-T2}\aligned
 \sum_{j=1}^2\left\la B(\widetilde{J}e_j, (J_\a e_j)^T), V_0^{N} \right\ra = &  \sum_{j=1}^2 \left\la B(\widetilde{J}e_j, \l_\a \tilde{J}e_j), V_0^{N} \right\ra \\
 = & \l_\a \left\la H, V_0^{N} \right\ra = \l_\a \left\la H, V_0 \right\ra.
\endaligned
\end{equation}
On the other hand, by the Gauss formula, we derive
\begin{equation}\label{eqn-T3}\aligned
  & \sum_{j=1}^2\left\la \overline{\nabla}_{\widetilde{J}e_j} V_0^{T}, J_\alpha e_j \right\ra =   \sum_{j=1}^2 \left\la \overline{\n}_{\widetilde{J}e_j}\left(\sum_{k=1}^2\left\la V_0, e_k \right\ra e_k\right), J_\a e_j \right\ra \\
  =& \sum_{j, k=1}^2 \left\la V_0, \overline{\n}_{\widetilde{J}e_j}e_k \right\ra \left\la e_k, J_\a e_j \right\ra  + \sum_{j, k=1}^2 \left\la V_0, e_k \right\ra \left\la \overline{\n}_{\widetilde{J}e_j} e_k, J_\a e_j \right\ra \\
 =& \sum_{j, k=1}^2\left\la V_0, B(\widetilde{J}e_j, e_k)\right\ra \left\la e_k, J_\alpha e_j \right\ra+\sum_{j, k=1}^2 \left\la V_0, e_k\right\ra \left\la B(\widetilde{J}e_j, e_k), J_\alpha e_j \right\ra  \\
=& \sum_{j=1}^2 \left\la V_0, B(\widetilde{J}e_j, (J_\a e_j)^T) \right\ra + \sum_{j=1}^2 \left\la B(\widetilde{J}e_j, V_0^{T}), J_\a e_j \right\ra. 
\endaligned
\end{equation}
Using the formula (\ref{eqn-J12}) again, we then get
\begin{equation}\label{eqn-T33}\aligned
   \sum_{j=1}^2\left\la \overline{\nabla}_{\widetilde{J}e_j} V_0^{T}, J_\alpha e_j \right\ra =& \sum_{j=1}^2 \left\la V_0,  B(\widetilde{J}e_j, \l_\a \widetilde{J}e_j) \right\ra + \sum_{j=1}^2 \left\la B(\widetilde{J}e_j, V_0^{T}), (J_\a e_j)^N \right\ra \\
=&   \lambda_\alpha \left\la V_0, H \right\ra +\sum_{j=1}^2 \left\la A^{(J_{\alpha}e_j)^N}(\widetilde{J}e_j), V_{0}^{T}\right\ra.
\endaligned
\end{equation}
Substituting (\ref{eqn-T2}) and (\ref{eqn-T33}) into (\ref{eqn-T1}), and using (\ref{eqn-RHS7}), we have
\begin{equation*}\aligned
    (\tau(J))^\alpha =& \sum_{j=1}^2\left\la \nabla_{\widetilde{J}e_j}H, J_\alpha e_j \right\ra = \sum_{j=1}^2\left\la A^{(J_{\alpha}e_j)^N}(\widetilde{J}e_j), V_{0}^{T}\right\ra \\
    =& \left\la \nabla \lambda_\alpha, V_0^{T}\right\ra = (dJ(V_0^{T}))^\a, \quad \quad \a= 1, 2, 3.
 \endaligned
\end{equation*}
Namely
\begin{equation*}
\tau\left(J\right)= dJ\left(V_0^{T}\right).
\end{equation*} 
\end{proof}

\vskip12pt

\section{A rigidity result}

Let $\Sigma^2$ be a 2-dimensional translating soliton in $\mathbb{R}^4$. 
Let $ \kappa^\bot:= \la R(e_1, e_2) \nu_1, \nu_2 \ra $ be the normal curvature, 
where $\{e_1, e_2\}$ is a local orthonormal frame field on $\Sigma$, $\{ \nu_1, \nu_2 \}$ is a local orthonormal frame normal field along $\Sigma$, and the orientation of $\{ e_1, e_2, \nu_1, \nu_2 \}$ is coincided with the one of $\{e_1, \widetilde{J}e_1, \widetilde{J}_2 e_1, \widetilde{J}_3 e_1\}$. Here  
$\widetilde{J}_2, \widetilde{J}_3 $ is the same as the ones in the proof of Lemma \ref{lem:hyper-Lagrangian}.

Let $V:= -V_0^{T}$ and $\Delta_V:=\Delta + \la V, \n \cdot \ra$.

By using Theorem \ref{thm:translator} and gradient estimates, we obtain a rigidity result of translating solitons.

\begin{thm}\label{thm:soliton}

Let $X: \Sigma^2 \to \mathbb{R}^4$ be a 2-dimensional complete translating soliton with nonpositive normal curvature. Assume that the image of the complex phase map is contained in a regular ball in $\mathbb{S}^2$, i.e., a geodesic ball $B_R(q)$ disjoint from the cut locus of $q$ and $R < \frac{\pi}{2}$, then $\Sigma$ has to be an affine plane.

\end{thm}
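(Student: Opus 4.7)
The plan is to reduce the theorem to showing that the complex phase map $J:\Sigma\to\mathbb S^{2}$ is constant, and then to establish this by combining Theorem~\ref{thm:translator} with a weighted cutoff argument. If $J\equiv q$, then $\Sigma$ is complex Lagrangian with respect to the parallel complex structure $\widehat J=\sum_{\alpha}q^{\alpha}J_{\alpha}$, hence a K\"ahler (and so minimal) submanifold of $(\mathbb R^{4},\widehat J)$. The translator equation $H=-V_{0}^{N}$ then forces $V_{0}^{N}\equiv 0$, and since $V_{0}$ is constant in $\mathbb R^{4}$ the Gauss formula gives $\n V_{0}=0$ and $B(V_{0},\cdot)=0$, from which one identifies $\Sigma$ as the affine $2$-plane spanned by $V_{0}$ and $\widehat J V_{0}$.

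By Theorem~\ref{thm:translator}, $J$ is a $V$-harmonic map with $V=-V_{0}^{T}$, i.e.\ $\tau(J)+dJ(V)=0$. The regular-ball hypothesis $J(\Sigma)\subset B_{R}(q)$ with $R<\pi/2$ supplies a strictly convex target function: taking $\phi(p):=1-\la p,q\ra_{\mathbb R^{3}}$, a direct computation gives
\begin{equation*}
\mathrm{Hess}_{\mathbb S^{2}}\phi=(1-\phi)\,g_{\mathbb S^{2}}\geq(\cos R)\,g_{\mathbb S^{2}}\quad\text{and}\quad |d\phi|^{2}=1-(1-\phi)^{2}\leq\sin^{2}R\quad\text{on }B_{R}(q).
\end{equation*}
Applying the standard chain rule for the tension field together with $V$-harmonicity yields, for $w:=\phi\circ J$,
\begin{equation*}
\D_{V}w=\sum_{i=1}^{2}\mathrm{Hess}_{\mathbb S^{2}}\phi\bigl(dJ(e_{i}),dJ(e_{i})\bigr)\geq(\cos R)\,|dJ|^{2},
\end{equation*}
and $w$ is uniformly bounded: $0\leq w\leq 1-\cos R<1$.

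To conclude $|dJ|\equiv 0$, I would exploit that $\D_{V}$ is symmetric with respect to the drift measure $d\mu:=e^{-\la V_{0},X\ra}d\mathrm{vol}$. Testing the previous inequality against $\eta^{2}$ for a compactly supported Lipschitz cutoff $\eta$ on $\Sigma$, integrating by parts against $d\mu$, and using $|\n w|\leq(\sin R)|dJ|$ together with Cauchy-Schwarz, one obtains
\begin{equation*}
\int_{\Sigma}\eta^{2}|dJ|^{2}\,d\mu\leq 4\tan^{2}R\int_{\Sigma}|\n\eta|^{2}\,d\mu.
\end{equation*}
It remains to exhibit cutoffs $\eta_{k}\uparrow 1$ for which $\int|\n\eta_{k}|^{2}\,d\mu\to 0$. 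This is where the assumption $\k^{\bot}\leq 0$ enters: through the Ricci equation it controls the second fundamental form and the intrinsic curvature of $\Sigma$, and combined with the translator identity it yields a weighted-area control on extrinsic annuli in the $V_{0}$-direction. A logarithmic cutoff in $\la V_{0},X\ra$ then sends the right-hand side to $0$, forcing $|dJ|\equiv 0$.

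The main obstacle is precisely this last step, which is what replaces the Omori-Yau argument of \cite{HanSun10}. Without bounded $|B|$ the Omori-Yau maximum principle is not directly available, and since $d\mu$ is not globally finite on a typical translator, a na\"ive intrinsic radial cutoff need not succeed. The nonpositive normal curvature hypothesis is used essentially to obtain just enough weighted-volume control in the $V_{0}$-direction for the cutoff estimate above to close, and this is the technical heart of the argument.
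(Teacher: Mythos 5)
Your reduction and your Caccioppoli inequality are fine as far as they go: the convex function $\psi=1-\cos\rho$ with ${\rm Hess}(\psi)=(\cos\rho)h$, and the chain rule $\D_V(\psi\circ J)=\sum_j{\rm Hess}(\psi)(dJ(e_j),dJ(e_j))\ge(\cos R)|dJ|^2$ using the $V$-harmonicity from Theorem \ref{thm:translator}, are exactly the paper's equation (\ref{eqn-PJ2}). But the step you defer to the end --- producing cutoffs $\eta_k\uparrow 1$ with $\int|\n\eta_k|^2\,d\mu\to 0$ for $d\mu=e^{-\la V_0,X\ra}d\,\mathrm{vol}$ --- is a genuine gap, not a technicality. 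The weight $e^{-\la V_0,X\ra}$ grows exponentially in the $-V_0$ direction, and the model case itself defeats the scheme: the conclusion forces $V_0$ to be tangent to the limiting plane (since $H\equiv 0$ means $V_0^N\equiv 0$), and for an affine plane containing $V_0$ one has $\mu(B_r)\sim e^{r}$, so a logarithmic cutoff makes $\int|\n\eta_k|^2\,d\mu$ blow up rather than vanish; log cutoffs need at most (sub)quadratic weighted area growth in dimension two. Cutting off in the height $\la V_0,X\ra$ instead does not even give compactly supported test functions, since level sets of the height are noncompact in general (e.g.\ the grim reaper plane in the paper's Remark). Finally, your assertion that $\k^\bot\le 0$ supplies the needed weighted-volume control is unsubstantiated: it is a pointwise curvature hypothesis and yields no area or volume bounds, and nothing in the paper suggests otherwise.

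The paper closes the argument without any integration, and it uses $\k^\bot\le 0$ in a completely different way. It runs a Bao--Shi type pointwise gradient estimate on extrinsic balls $D_a(o)=\Sigma\cap B_a(o)$ with the test function $f=(a^2-r^2)^2|H|^2/(b-\psi\circ J)^2$, where the cutoff factor is controlled by $\n r^2=2X^T$ and $\D r^2=4+2\la H,X\ra\le 4+2r$ (using $|H|=|V_0^N|\le 1$ from (\ref{eqn-T111})), so no volume growth ever enters. The hypothesis $\k^\bot\le 0$ acts through the pointwise Ricci-equation identity $|dJ|^2=|B|^2-2\k^\bot$, used twice: first to absorb the bad term in $\D_V|H|^2\ge 2|\n H|^2-2|B|^2|H|^2$ into the good convexity term $\left(\frac{\cos\rho}{b-\psi\circ J}-1\right)|dJ|^2$, and second to convert the resulting bound on $|dJ|^2$ at the maximum point into $|H|^2\le 2|dJ|^2$. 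Letting $a\to\infty$ gives $H\equiv 0$ (not $dJ$ constant directly), and then Proposition 3.2 of \cite{HanLi09} yields $B\equiv 0$, hence an affine plane. Note that your proposal never brings $|H|$ into play at all, whereas in the paper $|H|$ is the quantity being estimated and the coupling $|dJ|^2\ge|B|^2\ge|H|^2/2$ is the heart of the matter; without the extrinsic cutoff and this coupling your argument cannot close.
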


\begin{proof}

Since we can view $\Sigma$ as a hyper-Lagrangian submanifold in $\mathbb{R}^4$ with respect to some almost complex structure $\widetilde J$,   Let $\{ e_1, e_2=\widetilde{J}e_1 \}$ be a local orthonormal frame field on $\Sigma$  such that $\nabla e_i =0$ at the considered point. Denote $\nu_1=\widetilde{J}_2 e_1, \nu_2 =\widetilde J^{\bot}\nu_1= \widetilde{J_1}\widetilde J_2e_1 =\widetilde{J}_3 e_1$, where $\widetilde{J}_\b (\b = 1, 2, 3)$ is the same as the ones in the proof of Lemma \ref{lem:hyper-Lagrangian}. Then $\{\nu_1, \nu_2\}$  is a local orthonormal frame  normal field along $\Sigma$.

 From the translating soliton equation (\ref{eqn-T111}), we derive
\begin{equation*}
\n_{e_j} H = - \left( \overline{\n}_{e_j}(V_0-\la V_0, e_k \ra e_k) \right)^N = \la V_0, e_k \ra B_{jk}
\end{equation*}
and
\begin{equation*}
\n_{e_i}\n_{e_j} H = \la V_0, e_k \ra \n_{e_i} B_{jk} -\la H, B_{ik} \ra B_{jk},
\end{equation*}
where $B_{jk}=B(e_j, e_k)$.
Hence using the Codazzi equation, we obtain 
 \begin{equation*}\aligned
\D_V |H|^2 = & \D |H|^2 +\la V, \n |H|^2 \ra  \\
=& 2\la \n_{e_i}\n_{e_i}H, H \ra +2|\n H|^2+ \la V, \n |H|^2\ra \\
=& -2 \la H, B_{e_i e_k} \ra^2 +2\la \n_{V_0^{T}} H, H \ra +2|\n H|^2 + \la V, \n |H|^2\ra \\
=& -2 \la H, B_{e_i e_k} \ra^2 - \n_V |H|^2 +2|\n H|^2 + \la V, \n |H|^2\ra \\
=&  -2 \la H, B_{e_i e_k} \ra^2  + 2|\n H|^2.
\endaligned
\end{equation*}
It follows that 
 \begin{equation}\label{eqn-MC}\aligned
\D_V |H|^2 \geq   2|\n H|^2-2|B|^2|H|^2.
\endaligned
\end{equation}
By (\ref{eqn-RHS14}), we get
 \begin{equation*}\aligned
B(e_j, \widetilde{J}e_k) - \widetilde{J}^\bot B(e_j, e_k)=\sum_{\b=1}^3 e_j(\l_\b)J_\b e_k.
\endaligned
\end{equation*}
The above equality implies that
 \begin{equation}\label{eqn-JB}\aligned
|dJ|^2 = &\sum_{j=1}^2 |dJ(e_j)|^2 =\frac{1}{2}\sum_{j, k=1}^2 \left|dJ(e_j)e_k\right|^2 
= \frac{1}{2}\sum_{j, k=1}^2 \left| B(e_j, \widetilde{J}e_k) - \widetilde{J}^\bot B(e_j, e_k) \right|^2 \\
=& \frac{1}{2}\sum_{j, k=1}^2  \left| B(e_j, \widetilde{J}e_k)\right|^2 +  \frac{1}{2}\sum_{j, k=1}^2 \left|\widetilde{J}^\bot B(e_j, e_k) \right|^2 -\sum_{j, k=1}^2\la B(e_j, \widetilde{J}e_k),  \widetilde{J}^\bot B(e_j, e_k) \ra \\
=& |B|^2 -\sum_{j, k=1}^2\la B(e_j, \widetilde{J}e_k),  \widetilde{J}^\bot B(e_j, e_k) \ra 
\endaligned
\end{equation}
Applying the Ricci equation, we obtain 
\begin{equation}\label{eqn-BK}\aligned
& \sum_{j, k=1}^2\la B(e_j, \widetilde{J}e_k),  \widetilde{J}^\bot B(e_j, e_k) \ra =  \sum_{j, k, s=1}^2 \la B(e_j, \widetilde{J}e_k), \nu_s \ra \la \widetilde{J}^\bot B(e_j, e_k), \nu_s \ra \\
=& -\sum_{j, k=1}^2 \la B(e_j, \widetilde{J}e_k), \nu_1 \ra \la B(e_j, e_k), \nu_2 \ra + \sum_{j, k=1}^2 \la B(e_j, \widetilde{J}e_k), \nu_2 \ra \la B(e_j, e_k), \nu_1 \ra \\
=&   \sum_{k=1}^2 \la R(e_k, \widetilde{J}e_k) \nu_1, \nu_2 \ra= 2 \la R(e_1, e_2) \nu_1, \nu_2 \ra =2\kappa^\bot.
\endaligned
\end{equation}
Substituting (\ref{eqn-BK}) into  (\ref{eqn-JB}), we have
\begin{equation}\label{eqn-JBK}\aligned
|dJ|^2 = |B|^2 - 2\kappa^\bot.
\endaligned
\end{equation} 

 Let $\rho $ be the distance function on $\mathbb{S}^2$, and $h$ the Riemannian metric of $\mathbb{S}^2$. Define $\psi=1-\cos\rho$,  then ${\rm Hess}(\psi) =( \cos\rho)h$. 

For any $X=(x_1, ..., x_{4}) \in \mathbb{R}^{4}$, let $r=|X|$, then we have 
\begin{equation}\label{eqn-ER}\aligned
\nabla r^2 =&  2X^{T},  \quad |\nabla r| \leq 1 \\
\Delta r^2 = & 4 + 2\la H,  X \ra \leq 4+2r.
\endaligned
\end{equation}

Since $J(\Sigma) \subset B_R(q)\subset \mathbb{S}^2$, note that $R<\frac{\pi}{2}$, so we can choose a constant $b$, such that $\psi(R)< b < 1.$ Let $B_a(o)$ be the closed ball centered at the origin $o$ with radius $a $ in $\mathbb{R}^{4}$ and $D_a(o):= \Sigma\cap B_a(o)$. Define $f: D_a(o) \rightarrow \mathbb{R}$ by
\[
f=\frac{(a^2-r^2)^2|H|^2}{(b-\psi \circ J)^2}.
\]

Since $\left. f \right|_{\partial D_{a}(o)}=0$, $f$ achieves an absolute maximum in the interior of
$D_{a}(o)$, say $f\leq f(q)$, for some $q$ inside $D_{a}(o)$. By using the technique of support
function we may assume that $f$ is smooth near $q$. We may also assume $|H|(q) \neq 0$.
Then from
\begin{equation*}
\label{V3.3add}
          \begin{array}{rcl}
           \ds\vs \nabla f(q)&=&0, \\
       \ds \D_V f(q) & \leq & 0,
        \end{array}
\end{equation*}
we obtain the following at the point $q$:
\begin{equation}\label{3.3}
-\frac{2\nabla r^{2}}{a^{2}- r^{2}} + \frac{\nabla |H|^2}{|H|^2}
 + \frac{2\nabla (\psi\circ J)}{b - \psi\circ J} = 0,
\end{equation}
\begin{equation}\label{3.4}
- \frac{2 \D_V r^{2}}{a^{2}-r^{2}} - \frac{2 \left| \nabla r^{2}
\right|^{2}}{\left( a^{2} - r^{2} \right)^{2}} + \frac{\D_V
|H|^2}{|H|^2} - \frac{\left|\nabla |H|^2\right|^{2}}{|H|^4} +
\frac{2 \D_V (\psi \circ J)}{b-\psi \circ J} + \frac{2\left|
\nabla (\psi \circ J) \right|^{2}}{\left( b-\psi \circ J
\right)^{2}} \leq 0.
\end{equation}
Direct computation gives us
\begin{equation}\label{eqn-GMC}\aligned
|\n |H|^2|^2 = |2\la \n H, H \ra|^2 \leq 4|\n H|^2 |H|^2,
\endaligned
\end{equation}
\begin{equation}\label{eqn-PJ}\aligned
|\n(\psi\circ J)|^2 \leq |d\psi|^2|dJ|^2 \leq |dJ|^2.
\endaligned
\end{equation}
It follows from (\ref{eqn-MC}) and (\ref{eqn-GMC}) that
\begin{equation}\label{eqn-MB}\aligned
\frac{\D_V|H|^2}{|H|^2} \geq \frac{|\n |H|^2|^2}{2|H|^4} - 2|B|^2.
\endaligned
\end{equation}
From (\ref{3.3}), we obtain 
\begin{equation}\label{eqn-MCRJ}\aligned
\frac{|\n |H|^2|^2}{|H|^4} \leq \frac{4|\n r^2|^2}{(a^2-r^2)^2}+ \frac{8|\n r^2||\n(\psi\circ J)|}{(a^2-r^2)(b-\psi\circ J)} + \frac{4|\n(\psi\circ J)|^2}{(b-\psi\circ J)^2}
\endaligned
\end{equation}
By Theorem \ref{thm:translator}, $\tau_V(J) = 0$. Thus we get
\begin{equation}\label{eqn-PJ2}\aligned
\D_V(\psi\circ J) = \sum_{j=1}^2 {\rm Hess}(\psi)(dJ(e_j), dJ(e_j)) + d\psi(\tau_V(J)) = \cos\rho |dJ|^2.
\endaligned
\end{equation}
Substituting (\ref{eqn-JBK}), (\ref{eqn-ER}), (\ref{eqn-PJ}), (\ref{eqn-MB}), (\ref{eqn-MCRJ}), (\ref{eqn-PJ2}) into (\ref{3.4}), we have 
\begin{equation*}\label{eqn-GJ}\aligned
\left( \frac{\cos \rho}{b-\psi\circ J} -1 \right) |dJ|^2 - \frac{4r}{(a^2-r^2)(b-\psi\circ J)}|dJ| -\frac{4(1+r)}{a^2-r^2} -\frac{8r^2}{(a^2-r^2)^2} -2\kappa^\bot \leq 0.
\endaligned
\end{equation*}
By the assumption that $\k^\bot \leq 0$, we derive
\begin{equation*}\label{eqn-GJ}\aligned
\left( \frac{\cos \rho}{b-\psi\circ J} -1 \right) |dJ|^2 - \frac{4r}{(a^2-r^2)(b-\psi\circ J)}|dJ| -\frac{4(1+r)}{a^2-r^2} -\frac{8r^2}{(a^2-r^2)^2}  \leq 0.
\endaligned
\end{equation*}
Note an elementary fact that if $ax^{2}-bx-c \leq 0$ with $a, b, c
$ all positive, then
\begin{equation*}
x \leq \max \{ 2b/a, 2\sqrt{c/a} \}.
\end{equation*}
It is easy to see that there is a constant $C>0$ such that
$\frac{ \cos\rho}{b-\psi\circ J}  -1 >C.$  Therefore, at the
point $q$,
\begin{equation}\aligned\label{3.16}
|dJ|^2   \leq & \max \left\{  \frac{64 r^{2}}
{C^{2}(a^{2}-r^{2})^{2}(b-\psi\circ J)^{2}},  \frac{16(1+r)}{C(a^{2}-r^{2})} 
 +  \frac{32r^{2}}{C\left( a^{2}-r^{2} \right)^{2}}
 \right\}.
\endaligned
\end{equation}
By (\ref{eqn-JBK}) and $\k^\bot \leq 0$, we get
\[
|dJ|^2 \geq |B|^2 \geq \frac{|H|^2}{2}.
\]
Thus we obtain,
 at the point $q$,
\begin{equation}\aligned\label{3.16}
|H|^2  \leq  2\max \left\{  \frac{64 r^{2}}
{C^{2}(a^{2}-r^{2})^{2}(b-\psi\circ J)^{2}},  \frac{16(1+r)}{C(a^{2}-r^{2})} 
 +  \frac{32r^{2}}{C\left( a^{2}-r^{2} \right)^{2}} 
 \right\}.
\endaligned
\end{equation}
and 
\begin{equation*}\aligned\label{3.16}
f(q)  \leq  2\max \left\{  \frac{64 a^{2}}
{C^{2}\left(b-\psi(\widetilde{R})\right)^{4}},  \frac{16(1+a)a^2}{C\left(b-\psi(\widetilde{R})\right)^{2}} 
 +  \frac{32a^{2}}{C\left( b-\psi(\widetilde{R}) \right)^{2}} 
 \right\}.
\endaligned
\end{equation*}
Then for any point $x \in D_{a/2}(o)$, we have
\begin{equation}\aligned\label{3.16'}
|H|^2(x)  \leq & \frac{(b-\psi\circ J)^2}{(a^2-r^2)^2}f(q) \\
\leq & \frac{32b^2}{9a^4} \max \left\{  \frac{64 a^{2}}
{C^{2}\left( b-\psi(\widetilde{R}) \right)^{4}},  \frac{16(1+a)a^2}{C\left( b-\psi(\widetilde{R}) \right)^{2}} 
 +  \frac{32a^{2}}{C\left( b-\psi(\widetilde{R}) \right)^{2}} 
 \right\}.
\endaligned
\end{equation}
Hence we may fix $x$ and let $a \rightarrow \infty$ in (\ref{3.16'}), we then derive that $H\equiv 0.$ 
Then by Proposition 3.2 in \cite{HanLi09}, $B \equiv 0$. Namely, $\Sigma $ is an affine plane.
\end{proof}

\begin{rem}
\begin{itemize}
    \item[(1)] Let $\alpha$ be the K\"ahler angle of the translator, Theorem \ref{thm:soliton} implies that if $\cos\alpha$ has a positive lower bound, then any complete symplectic translating soliton with nonpositive normal curvature has to be an affine plane. Han-Sun  \cite{HanSun10} showed that such a rigidity result holds under an additional bounded second fundamental form assumption.  
      \item[(2)] The authors \cite{QS19} proved that if the image of the complex phase map is contained in a regular ball in $\mathbb{S}^2$, then any 2-dimensional complete translating soliton with flat normal bundle must be flat. Thus this result is improved by the above Theorem \ref{thm:soliton}.
    \item[(3)] The restriction of the image under the complex phase map in Theorem \ref{thm:soliton} is necessary. For example, the ``grim reaper" $(x, y, -\ln\cos x,0), |x|<\pi/2, y\in\mathbb{R}$ is a translating soliton to the symplectic MCF which translates in the direction of the constant vector $(0,0,1,0)$, and $J=(\cos x,0,-\sin x), |x|<\pi/2$ can not contained in any regular ball of $\mathbb{S}^2$. One can check that $|B|^2=|H|^2=|J|^2=\cos^2x$ and the normal curvature is zero. 
\end{itemize}

\end{rem}

Due to the fact that, in a hyperk\"ahler 4-manifold, a surface being symplectic is equivalent to the condition that the image under the complex phase map is contained in an open hemisphere while a surface being Lagrangian is equivalent to the condition that the image under the complex phase map is contained in a great circle. 
Theorem \ref{thm:soliton} implies the following:

\begin{cor}\label{cor-1}

Let $X: \Sigma^2 \to \mathbb{R}^4$ be a complete Lagrangian translating soliton with nonpositive normal curvature. If the cosine of the Lagrangian angle has a positive lower bound, then $\Sigma$ has to be an affine plane.

\end{cor}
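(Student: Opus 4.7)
The plan is to run a maximum-principle argument on a carefully chosen test function built from $|H|^2$, the distance to the origin in $\mathbb{R}^4$, and a barrier on $\mathbb{S}^2$ controlling the complex phase map $J$. The drift Laplacian $\Delta_V = \Delta + \langle V, \nabla \cdot \rangle$ with $V = -V_0^T$ is the natural operator here because Theorem \ref{thm:translator} gives $\tau_V(J) = 0$, so $J$ behaves like a harmonic map with respect to $\Delta_V$.

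First I would derive the key pointwise inputs. Using the translating soliton equation $H = -V_0^N$ and the Codazzi equation, differentiating twice gives $\Delta_V |H|^2 \ge 2|\nabla H|^2 - 2|B|^2 |H|^2$, and the Kato-type inequality $|\nabla |H|^2|^2 \le 4|\nabla H|^2 |H|^2$ then yields
\begin{equation*}
\frac{\Delta_V |H|^2}{|H|^2} \ge \frac{|\nabla |H|^2|^2}{2|H|^4} - 2|B|^2.
\end{equation*}
Next, exploiting the hyper-Lagrangian identity (\ref{eqn-RHS14}) combined with the Ricci equation identifies $|dJ|^2 = |B|^2 - 2\kappa^\perp$. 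In particular, when $\kappa^\perp \le 0$ we get $|dJ|^2 \ge |B|^2 \ge \tfrac{1}{2}|H|^2$, which is crucial for transferring a bound on $|dJ|$ back to a bound on $|H|$. Finally, taking $\psi = 1-\cos\rho$ on $\mathbb{S}^2$, where $\rho$ is the distance from the center $q$ of the regular ball, one has $\mathrm{Hess}(\psi) = (\cos\rho)h$, so Theorem \ref{thm:translator} gives $\Delta_V(\psi\circ J) = \cos\rho\,|dJ|^2$. Since $R < \pi/2$, we can pick $b$ with $\psi(R) < b < 1$ so that $\cos\rho/(b - \psi\circ J) - 1$ is bounded below by a positive constant $C$ on $D_a(o) := \Sigma \cap B_a(o)$.

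Then I would set up the Colding--Minicozzi style test function
\begin{equation*}
f = \frac{(a^2 - r^2)^2 |H|^2}{(b - \psi\circ J)^2}
\end{equation*}
on $D_a(o)$, where $r = |X|$. Since $f$ vanishes on the boundary, it attains an interior maximum at some point $q_a$, and standard computations on $\mathbb{R}^4$ (using $|\nabla r| \le 1$ and $\Delta r^2 \le 4 + 2r$ via the soliton equation) control the $r$-terms. At $q_a$, combining $\nabla f = 0$ with $\Delta_V f \le 0$ and substituting the Bochner inequality, the identity for $\Delta_V(\psi\circ J)$, and $|d\psi|^2 \le 1$, reduces after rearrangement to a quadratic inequality in $|dJ|$ of the form $C|dJ|^2 - c_1(a,r)|dJ| - c_2(a,r) \le 0$ at $q_a$. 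The elementary bound $ax^2 - bx - c \le 0 \Rightarrow x \le \max\{2b/a, 2\sqrt{c/a}\}$ then yields an explicit bound on $|H|^2(q_a)$, hence on $f(q_a)$, in terms of $a$ alone.

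For any fixed $x \in D_{a/2}(o)$, $f(x) \le f(q_a)$ translates into a bound on $|H|^2(x)$ that decays like $1/a^2$ as $a \to \infty$, giving $H \equiv 0$ on $\Sigma$. Invoking Proposition~3.2 of \cite{HanLi09} (a minimal surface with this hyper-Lagrangian structure and phase constraint is totally geodesic) upgrades $H \equiv 0$ to $B \equiv 0$, so $\Sigma$ is an affine plane. The main obstacle I expect is organizing the substitution cleanly so that the $(b - \psi \circ J)$-denominators and the $r$-dependent terms combine into a single quadratic inequality with controllable coefficients; getting the positivity $\cos\rho/(b-\psi\circ J) - 1 > C > 0$ to survive after absorbing the cross term from $\nabla f = 0$ is the crux, and it is precisely the condition $R < \pi/2$ together with the choice $\psi(R) < b < 1$ that makes this work.
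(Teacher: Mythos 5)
Your analytic machinery is sound, but as written it proves the wrong statement: nowhere in your argument do you use the hypothesis that $\Sigma$ is Lagrangian or that the cosine of the Lagrangian angle has a positive lower bound. From the first paragraph on you work with ``the center $q$ of the regular ball'' and ``$R<\pi/2$,'' i.e.\ you have silently substituted the hypothesis of Theorem \ref{thm:soliton} (image of the complex phase map contained in a regular ball) for the hypothesis of Corollary \ref{cor-1}. The missing step --- which is in fact the entire content of the paper's proof of the corollary --- is the bridge between the two: in a hyperk\"ahler $4$-manifold a surface is Lagrangian precisely when its complex phase map takes values in a great circle of $\mathbb{S}^2$, so one may write $J=(\cos\theta,\sin\theta,0)$ with $\theta$ the Lagrangian angle (as in the remark following the corollary in the paper). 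If $\cos\theta\ge\delta>0$, then $J(\Sigma)$ lies in the arc $\{\,|\theta|\le\arccos\delta\,\}$, hence in the geodesic ball of radius $\arccos\delta<\pi/2$ centered at $(1,0,0)$, which is a regular ball since the cut locus of a point of $\mathbb{S}^2$ is its antipode. Without this observation, your barrier $\psi=1-\cos\rho$ and the choice $\psi(R)<b<1$ have no justification from the corollary's actual hypothesis.

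Once that reduction is in place, everything else you wrote is correct --- but it is also redundant, since it is essentially a verbatim re-derivation of the paper's proof of Theorem \ref{thm:soliton}: the drift Bochner inequality $\Delta_V|H|^2\ge 2|\nabla H|^2-2|B|^2|H|^2$, the identity $|dJ|^2=|B|^2-2\kappa^{\perp}$ via the hyper-Lagrangian structure and the Ricci equation, the computation $\Delta_V(\psi\circ J)=\cos\rho\,|dJ|^2$ from Theorem \ref{thm:translator}, the cutoff function $f=(a^2-r^2)^2|H|^2/(b-\psi\circ J)^2$, the quadratic inequality in $|dJ|$ at the interior maximum, the limit $a\to\infty$ giving $H\equiv 0$, and the appeal to Proposition 3.2 of \cite{HanLi09} to get $B\equiv 0$. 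The paper deduces the corollary in one line by citing the theorem, exactly because the great-circle picture converts the Lagrangian-angle bound into the regular-ball condition. So the efficient fix is: state the reduction above and then invoke Theorem \ref{thm:soliton} directly, rather than rerunning the maximum-principle argument.
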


\begin{rem}
Using the Gauss equation, we get
\[
|B|^2 = |H|^2 - 2\k,
\]
where $\k$ is the sectional curvature of $\Sigma$. Then by the above equality
 and (\ref{eqn-JBK}), we can conclude that 
\[
|dJ|^2 =|H|^2 - 2(\kappa + \kappa^\bot).
\]
For Lagrangian surfaces, 
the complex phase map $J: \Sigma \to \mathbb{S}^2$ can be represented by $(\cos \theta, \sin \theta, 0)$, thus direct computation gives
\begin{equation*}\aligned\label{eqn-L}
|dJ|^2 = &\sum_{j} |dJ(e_j)|^2 = \sum_{j} |e_j(\cos \theta)J_1 + e_j(\sin\theta)J_2|^2 \\
= &\sum_j(-\sin\theta e_j(\theta))^2 + ( \cos\theta e_j(\theta) )^2 =|\n \theta|^2 \\
=& |\widetilde{J}\n \theta|^2 =|H|^2.
\endaligned
\end{equation*}
The above two equalities imply that for Lagrangian surfaces, we have $\kappa + \kappa^\bot =0$. This shows that for Lagrangian surfaces, $\kappa^\bot \leq 0$ if and only if $\kappa \geq 0$. Therefore, Corollary \ref{cor-1} is equivalent to the Main Theorem 2 in \cite{HanSun10}.
\end{rem}

\vskip 10pt

\noindent{\bf Acknowledgements} The author would like to thank Dr. L. Sun for helpful discussions.

\vskip24pt

\end{document}